\colorlet{Violet}{red!45!blue}
\definecolor{ForestGreen}{RGB}{34,139,34}
\definecolor{ComplementFG}{RGB}{34,86,139}
\newtheorem{theorem}{Theorem}[section]
\newtheorem{proposition}[theorem]{Proposition}
\newtheorem{lemma}[theorem]{Lemma}
\newtheorem*{theorem*}{Theorem}
\theoremstyle{definition}
\newtheorem{remark}[theorem]{Remark}
\newtheorem{definition}[theorem]{Definition}
\newcommand{\frp}{\operatorname{frac}}
\numberwithin{equation}{section}
\title{The Three Gap Theorem and Periodic Functions}
\author{A. Suki Dasher}
\address{University of Minnesota, Minneapolis, MN, USA}
\email{dasher@umn.edu}
\author{A. Hermida}
\address{Smith College, Northampton, MA, USA}
\email{ahermida@uw.edu}
\author{Tian An Wong}
\address{University of Michigan, Dearborn, MI, USA}
\email{tiananw@umich.edu}
\subjclass[2010]{11K06,  11J71}
\date{\today}
\begin{document}

\begin{abstract}
The Three Gap Theorem, also known as the Steinhaus Conjecture, is a classical result on the combinatorics of the fractional part function, and has since been generalized in many ways.
In this paper, we pose a new problem related to these results:
for which other periodic functions does an analogue of the Three Gap Theorem hold?
We prove analogous results for certain classes of piecewise-linear periodic functions and demonstrate the existence of functions for which no bound exists on the number of gap lengths.
\end{abstract}

\maketitle

\section{Introduction}
The Three Gap Theorem, originally conjectured by H. Steinhaus, states that, given an angle $\theta$ and a positive integer $N$, the points corresponding to the angles $\theta, 2\theta, \ldots, N\theta$ on a directed circle partition its circumference into arcs of at most three distinct lengths.
In particular, for any real $\alpha$ and positive integer $N$, the elements of the set
\begin{equation}
\label{nalpha}
\{d\alpha \bmod 1 : 1 \leq d \leq N\}
\end{equation}
partition the circle $\mathbb{R}/\mathbb{Z}$ into subintervals of at most three different lengths, where the length of a subinterval $[a,b]$ is given by $\min \{|b-a+n| : n \in \mathbb Z\}$.
This result is in contrast to the fact that for irrational $\alpha$, the points $$\alpha, 2\alpha, \ldots, N\alpha$$ taken modulo 1 become equidistributed in the circle as $N$ tends to infinity, a well-known result due to Weyl.
Many proofs of the Three Gap Theorem have been given \cite{CG, halton, liang, slater, sos, swier, ravenstein} and the theorem has been generalized in multiple forms, including to Riemannian manifolds \cite{BS} and ergodic theory \cite{chevallier, gen0}.
Recent works continue to generalize and produce new proofs of the theorem \cite{BGS,BK,DasHaynes,manish}.
Interpreting the elements in \eqref{nalpha} as values of the fractional part function
\begin{equation}
\label{frac}
\frp(x) = x - \lfloor x \rfloor
\end{equation}
from $\mathbb{R}$ to the circle $S^1 \simeq \mathbb R/\mathbb Z$, we can place the Steinhaus problem in the context of a more general question:
given a periodic function $f  : \mathbb{R} \to \mathbb{R}$, does there exist a bound on the number of distinct distances between adjacent points of the set
$$
\{f(d\alpha) : 1 \leq d \leq N\}
$$
which is independent of $\alpha$ and $N$?

Let $G_{f,\alpha,N}$ be the set of distinct gap lengths determined by the elements of the above set (see Definition \ref{gaps} for a precise description) and let $|G_{f,\alpha,N}|$ be its cardinality.
In this paper, we explore the existence of a bound on $|G_{f,\alpha,N}|$ for various classes of periodic functions $f$ and show that $|G_{f,\alpha,N}|$ is indeed bounded in certain cases where $f$ is piecewise-linear.

In our study of periodic functions $f$, we fix conventions by assuming $P > 0$ is the fundamental period of $f$ and restrict our discussion to the values of $f$ on the interval $[0,P)$.
To avoid pathological cases, we consider only piecewise-linear periodic functions which satisfy the following.
We may partition $[0,P)$ into finitely many disjoint intervals $I_1, I_2,\dots, I_\ell$ of the form $I_i = [a_i, b_{i})$ where $a_i < b_i$ such that $f$ is linear over each
interval $I_i$ and 
$$
[0,P) =\bigsqcup_{i=1}^{\ell} I_i.
$$
We assume the ordering on the subintervals is increasing in the sense that given $I_{i_1}$ and $I_{i_2}$ with $1\le i_1 < i_2 \le \ell$, the right endpoint of $I_{i_1}$ is less than or equal to the left endpoint of $I_{i_2}$.
We also assume the partition  is maximal in the sense that for any $I_i$ with $1\le i < \ell$, the function $f$ is linear on $I_i$ and $I_{i+1}$, but non-linear on
$I_i \cup I_{i+1}$.
If $f$ is a piecewise-linear function, each $I_i$ corresponds to a linear equation $f_i$ in the definition of $f$ over its fundamental domain $[0,P)$, so we call $f_i$ a {\em piece} of $f$.
The number of pieces of $f$ is $\ell$, which is the minimal number of linear equations necessary to define $f$ over $[0,P)$.
We say that $f$ is \textit{injective on its fundamental domain} if it is injective on $[0, P)$.
Our first main result is the following theorem.

\begin{theorem}
	\label{general}
	Let $f$ be a piecewise-linear periodic function which is injective on its fundamental domain and has $\ell$ pieces whose slopes have $\mu$ distinct magnitudes.
	Then for any $\alpha \in \mathbb{R}$ and $N \in \mathbb{N}$, 
	$$
	|G_{f,\alpha,N}| \leq 3\mu + \ell.
	$$ 
\end{theorem}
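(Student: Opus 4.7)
The plan is to reduce to the classical Three Gap Theorem applied to the inputs, then track how each 3GT gap transforms into a gap among the sorted outputs. After scaling the Three Gap Theorem from $\mathbb{R}/\mathbb{Z}$ to $\mathbb{R}/P\mathbb{Z}$, the $N$ points $x_j := d\alpha \bmod P$ for $1 \le d \le N$ subdivide $[0,P)$ into consecutive subintervals whose lengths take at most three distinct values, say $g_1, g_2, g_3$.

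Because $f$ is injective on $[0,P)$ and linear on each piece $I_i$ with nonzero slope $s_i$ (a zero slope would violate injectivity on an interval), the image intervals $J_i := f(I_i)$ are pairwise disjoint in $\mathbb{R}$. Sorting the $N$ distinct outputs $y_j := f(x_j)$ in increasing order is then accomplished by first sorting the $J_i$ on the real line and then listing the outputs inside each $J_i$ in their internal monotone order (possibly reversed from the input order, depending on the sign of $s_i$). Each consecutive gap in the sorted outputs is therefore of one of two types: a \emph{within-piece} gap, whose two endpoints lie in a common $J_i$, or a \emph{between-piece} gap, whose endpoints lie in distinct $J_i$'s.

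For the within-piece gaps, the key observation is that if two inputs $x_{j_1}, x_{j_2} \in I_i$ are consecutive in the sorted order restricted to $I_i$, then they are automatically consecutive in the sorted order on all of $[0,P)$: any intermediate input would, by virtue of $I_i$ being an interval, itself lie in $I_i$ and contradict consecutiveness. Hence the input gap is one of $g_1, g_2, g_3$, and linearity of $f$ on $I_i$ converts it to an output gap of $|s_i|(x_{j_2} - x_{j_1})$. Grouping the $\ell$ pieces by slope magnitude, each of the $\mu$ distinct magnitudes contributes at most three possible within-piece gap lengths, for a total of at most $3\mu$. The between-piece gaps number at most $\ell - 1$, one for each consecutive pair of nonempty $J_i$ in real-line order, contributing at most $\ell - 1$ additional distinct lengths. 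Summing gives $|G_{f,\alpha,N}| \le 3\mu + (\ell - 1) \le 3\mu + \ell$.

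I expect the principal conceptual check to be the argument that within-piece input gaps are genuinely 3GT gaps, which rests on $I_i$ being a subinterval of $[0,P)$; once that is pinned down, the remainder is bookkeeping about the sorted output sequence. The slight slack between $3\mu + \ell - 1$ and the stated $3\mu + \ell$ suggests that the precise gap definition (Definition \ref{gaps}) may include one additional notional gap—e.g.\ a boundary or wrap-around gap depending on how outputs are organized—which would be absorbed cleanly into the same estimate.
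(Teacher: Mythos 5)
Your proof is correct and follows essentially the same route as the paper's: decompose the output gaps into within-piece (interior) gaps, bounded by $3\mu$ via the Three Gap Theorem on the inputs followed by scaling by the slope magnitudes (this is the paper's Lemma \ref{Interior}), and between-piece plus extremal gaps, bounded by $\ell$. Your explicit justification that within-piece consecutive outputs come from globally consecutive inputs (via convexity of $I_i$ and disjointness of the image intervals) is the same mechanism the paper invokes, just spelled out more carefully.
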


\noindent
For piecewise-linear periodic functions which are not injective on their fundamental domains, under some hypotheses $|G_{f, \alpha, N}|$ is bounded (for examples, see Section \ref{PLNPI}).
However, one cannot in general find a bound on $|G_{f, \alpha, N}|$ that is dependent upon the number of pieces of $f$ and their corresponding slopes as in the above theorem.

\begin{theorem}
\label{main}
For any $n \in \mathbb{N}$, there exists a piecewise-linear periodic function $f$ which has two pieces and such that for some $\alpha \in \mathbb{R}$ and some $N \in \mathbb{N}$,
$$
|G_{f, \alpha, N}| > n.
$$ 

\end{theorem}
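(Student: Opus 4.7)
The plan is to exploit the freedom afforded by dropping the injectivity hypothesis in Theorem~\ref{general}, and construct a two-piece non-injective function whose image-gap structure becomes arbitrarily complex as $N$ grows. Concretely, I would define $f$ of fundamental period $P = 2$ by
\[
f(x) = \begin{cases} x & x \in [0,1), \\ a(2-x) & x \in [1,2), \end{cases}
\]
for an irrational $a > 0$; one may take $a = \sqrt{3}$. The two pieces have slopes $+1$ and $-a$, mapping $[0,1)$ and $[1,2)$ onto the overlapping intervals $[0,1)$ and $(0,a]$, so $f$ is non-injective on $[0,2)$. Choose $\alpha \in \mathbb{R}$ so that $\gamma := \alpha/2$ is irrational and $\{1, \gamma, a, a\gamma\}$ is linearly independent over $\mathbb{Q}$ (as holds for $\gamma = (\sqrt{5}-1)/2$ and $a = \sqrt{3}$).

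The image set $\{f(d\alpha) : 1 \le d \le N\}$ then decomposes as a disjoint union $A_N \cup B_N$ with
\[
A_N = \bigl\{ 2\{d\gamma\} : \{d\gamma\} \in [0, \tfrac{1}{2}),\ 1 \le d \le N \bigr\}, \qquad B_N = \bigl\{ 2a(1 - \{d\gamma\}) : \{d\gamma\} \in [\tfrac{1}{2}, 1),\ 1 \le d \le N \bigr\}.
\]
After sorting $A_N \cup B_N$, each consecutive gap is one of three types: an $A$-$A$ gap of the form $2(k\gamma - \ell)$ for integers $k, \ell$; a $B$-$B$ gap of the form $2a(k\gamma - \ell)$; or a mixed $A$-$B$ gap of the form $\pm\bigl(2a(1 - \{d_1\gamma\}) - 2\{d_2\gamma\}\bigr)$ for some indices $d_1, d_2$.

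By $\mathbb{Q}$-linear independence of $\{1, \gamma, a, a\gamma\}$, mixed gaps from distinct index pairs take distinct real values, and they are moreover distinct from all $AA$- and $BB$-gaps. At the same time, equidistribution of $\{d\gamma\}$ on $[0,1)$ forces $|A_N|$ and $|B_N|$ each to approach $N/2$, and forces $A_N$- and $B_N$-points to interleave throughout the overlap region $(0, \min(1, a))$. This produces on the order of $N$ mixed adjacencies in the sorted union, each contributing a distinct gap length, so $|G_{f, \alpha, N}| \to \infty$ as $N \to \infty$. For any prescribed $n$, one can then choose $N$ large enough that $|G_{f, \alpha, N}| > n$.

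The main obstacle is establishing the lower bound on the number of mixed adjacencies. Distinctness of the mixed gap values follows from the linear independence assumption, but showing that a positive density of consecutive pairs are of mixed type requires a careful count. A discrepancy estimate for the Kronecker sequence $\{d\gamma\}$, together with a local analysis of how $A$- and $B$-points interlace in the overlap, should suffice; alternatively, one may verify the count directly for an explicit family of $(\gamma, a, N)$, exhibiting the required growth of $|G_{f, \alpha, N}|$ by direct calculation.
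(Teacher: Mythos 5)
Your route is genuinely different from the paper's, and its skeleton is sound, but the step you yourself flag as ``the main obstacle'' is exactly where the proof is incomplete, and it is the only step that actually requires work. The distinctness half of your argument is fine: writing a mixed gap as $\pm\bigl(2a(1+m_1)-2ad_1\gamma-2d_2\gamma+2m_2\bigr)$, the $\mathbb{Q}$-linear independence of $\{1,\gamma,a,a\gamma\}$ recovers $(d_1,d_2)$ from the value, so distinct mixed adjacencies give distinct lengths, all different from the $AA$- and $BB$-gaps. What is not established is that the number of mixed adjacencies grows without bound. Equidistribution of $\{d\gamma\}$ is only a density statement and does not by itself force the two point sets to interleave at scale $1/N$; ``forces $A_N$- and $B_N$-points to interleave throughout the overlap region'' is an assertion, not an argument. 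The gap is fillable: for your choice $\gamma=(\sqrt5-1)/2$, the Three Gap Theorem gives that consecutive gaps within $A_N$, and within $B_N\cap(0,1)$ (an affine image of the orbit restricted to a subinterval), are all $O(1/N)$ with an absolute implied constant, so every subinterval of $(0,1)$ of length $C/N$ contains a point of each type; partitioning $(0,1)$ into $\asymp N$ such disjoint intervals, each must contain a type change in the sorted order, and adjacency inside an interval is global adjacency, yielding $\gg N$ distinct gaps. For a general irrational $\gamma$ the largest gap need not be $O(1/N)$ for every $N$, so some care with the choice of $\gamma$ or of $N$ is genuinely needed here. Until some such count is written down, the proof is not complete.

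For comparison, the paper sidesteps all of this by exploiting that the theorem only asks for \emph{some} $f$, $\alpha$, and $N$: it takes $\alpha=1/N$ rational and a two-piece function with slopes $1$ and $1+\varepsilon$ for $\varepsilon$ small relative to $N$, so that the images from the two pieces interleave explicitly as $\tfrac{k}{N}<\tfrac{k}{N}+\tfrac{k\varepsilon}{N}<\tfrac{k+1}{N}$, producing the manifestly distinct gaps $\tfrac{k\varepsilon}{N}$ for $1\le k\le\tfrac{N}{2}-1$; no equidistribution or irrationality is used anywhere. Your construction, once the adjacency count is supplied, buys a stronger conclusion---a single fixed $f$ and $\alpha$ for which $|G_{f,\alpha,N}|\to\infty$ as $N\to\infty$---at the cost of the three-distance/discrepancy machinery, whereas the paper's argument is elementary but tailors $f$, $\alpha$, and $N$ to $n$.
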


\noindent We prove an analogous result for certain non-linear functions using a similar method. 

\begin{theorem}
\label{sin}
Let $f$ be a periodic function which is twice continuously differentiable and such that $f''(0) \neq 0$.
Then for any $n \in \mathbb{N}$, there exists $\alpha \in \mathbb{R}$ such that
$$
|G_{f,\alpha,n+1}| \geq n.
$$
\end{theorem}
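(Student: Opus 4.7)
The plan is to take $|\alpha|$ small and let the second-order Taylor expansion of $f$ near $0$ govern the geometry of the $n+1$ points $f(\alpha), f(2\alpha), \dots, f((n+1)\alpha)$. Writing
$$
f(d\alpha) = f(0) + f'(0)(d\alpha) + \tfrac{1}{2}f''(0)(d\alpha)^2 + r_d(\alpha),
$$
the $C^2$ hypothesis yields $r_d(\alpha) = \tfrac{1}{2}(f''(\xi_d) - f''(0))(d\alpha)^2$ for some $\xi_d$ between $0$ and $d\alpha$. Since $d$ ranges over the finite set $\{1, \dots, n+1\}$ and $f''$ is continuous at $0$, this gives $r_d(\alpha) = o(\alpha^2)$ uniformly in $d$ as $\alpha \to 0$.

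First I would show that for $|\alpha|$ small, the points are already listed in sorted order as $d$ increases. Setting
$$
\Delta_d := f((d+1)\alpha) - f(d\alpha) = f'(0)\alpha + \tfrac{1}{2}f''(0)(2d+1)\alpha^2 + o(\alpha^2),
$$
if $f'(0)\neq 0$ the linear term dominates and every $\Delta_d$ with $1 \leq d \leq n$ takes the sign of $f'(0)\alpha$; if $f'(0) = 0$ the quadratic term dominates and every $\Delta_d$ takes the sign of $f''(0)$, using $2d+1 > 0$. In either case the $\Delta_d$ are all nonzero with a common sign, so the $n+1$ points produce exactly $n$ consecutive gaps of lengths $|\Delta_1|, \dots, |\Delta_n|$.

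Next I would establish pairwise distinctness of these lengths. For $d \neq d'$ in $\{1,\dots,n\}$,
$$
\Delta_d - \Delta_{d'} = f''(0)(d - d')\alpha^2 + o(\alpha^2),
$$
which is nonzero for $|\alpha|$ sufficiently small since $f''(0) \neq 0$. Combined with the common sign of the $\Delta_d$, this forces $|\Delta_d| \neq |\Delta_{d'}|$, giving $n$ distinct gap lengths and hence $|G_{f, \alpha, n+1}| \geq n$. The only delicate point is selecting a single threshold $\alpha_0$ that handles all pairs at once; this is routine because $n$ is fixed, the index set is finite, and the uniform control $r_d(\alpha) = o(\alpha^2)$ applies to every pairwise comparison simultaneously. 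Beyond that the argument is a direct perturbation of the parabolic model $x \mapsto f(0) + f'(0)x + \tfrac{1}{2}f''(0)x^2$, for which the analogous statement holds exactly by the same computation.
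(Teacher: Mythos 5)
Your proof is correct, but it reaches the conclusion by a genuinely different mechanism than the paper's. The paper avoids Taylor expansion entirely: it uses $f''(0)\neq 0$ and continuity to produce an explicit interval $[0,\mathcal{I}]$ (or $[0,\mathcal{I}']$) on which $f''$ has constant sign and $f'$ does not vanish, so that both $f$ and $f'$ are strictly monotone there; it then sets $\alpha=\mathcal{I}/(n+1)$ (or $\mathcal{I}'/(n+1)$) and observes that $x\mapsto f((x+1)\alpha)-f(x\alpha)$ has nonvanishing derivative $\alpha\bigl(f'((x+1)\alpha)-f'(x\alpha)\bigr)$, hence is strictly monotone, which makes the $n$ consecutive differences pairwise distinct. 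Your argument instead works asymptotically as $\alpha\to 0$: the uniform Lagrange-remainder estimate $r_d(\alpha)=o(\alpha^2)$ is the right tool since $d$ ranges over a fixed finite set, the case split on $f'(0)$ correctly guarantees a common sign for the $\Delta_d$ (so the points are already sorted and the $|\Delta_d|$ really are consecutive gaps of $G_{f,\alpha,n+1}$), and the second-order comparison $\Delta_d-\Delta_{d'}=f''(0)(d-d')\alpha^2+o(\alpha^2)$ cleanly separates the gap lengths because $|d-d'|\geq 1$. The paper's route buys an explicit choice of $\alpha$ and requires no expansion; yours is purely local at $0$ and shows the conclusion holds for \emph{every} sufficiently small nonzero $\alpha$, which is slightly stronger than the bare existence statement. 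Both arguments ultimately rest on the same fact that nonzero curvature at the origin forces the consecutive differences to be strictly monotone in $d$.
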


The above three theorems lead to the question of characterizing the set of periodic functions $f$ such that $G_{f,\alpha,N}$ is finite for any $\alpha$ and $N$, which we leave as a possible direction for future study.
More generally, we observe that the perspective taken in this paper can also be applied to other generalizations of the Three Gap Theorem, as a question about the dynamics of self-maps of a given space, and the nature of such finiteness results.

We conclude this introduction with a brief summary of the paper.
In Section \ref{affine1} we introduce the basic definitions and notations that we require, and also extend the Three Gap Theorem to affine transformations of the fractional part function $\frp(x)$ given in (\ref{frac}).
Then in Sections \ref{PLPI} and \ref{PLNPI} we study the gap length sets of piecewise-linear periodic functions that are injective and not injective on their fundamental domains, respectively, and prove Theorems \ref{general} and \ref{main}.
Finally, in Section \ref{NL} we show that gap length sets of certain non-linear periodic functions can be arbitrarily large, proving Theorem \ref{sin}.

\section{Affine Transformations}
\label{affine1}

To discuss the number of distinct gap lengths of a function, we first introduce the notion of a gap length set associated to a periodic function.

\begin{definition}
\label{gaps}
Let $f$ be a bounded periodic function.
For any $\alpha \in \mathbb{R}$ and $N \in \mathbb{N}$, let $s_1, s_2, \dots, s_n$ denote the distinct values of 
$$
f(d\alpha),\quad  1 \leq d \leq N,
$$
indexed such that $s_1 < s_2 < \cdots < s_n$.
Then the \textit{gap length set} of $f$ associated with $\alpha$ and $N$ is the set 
$$
G_{f,\alpha,N} = \{s_1 - \inf{f} + \sup{f} - s_n \} \cup \{ s_{j+1}-s_j : 1 \leq j < n \},
$$
the elements of which are called \textit{gap lengths}.
Additionally, let $\beta \in \mathbb{R}$ and let $t_1, t_2, \dots, t_m$ denote the distinct values of 
$$
f(d\alpha + \beta), \quad 1 \leq d \leq N,
$$
indexed such that $t_1 < t_2 < \cdots < t_m$.
Then we define 
$$
G_{f,\alpha, \beta, N}= \{t_1 - \inf{f} + \sup{f} - t_m \} \cup \{ t_{j+1}-t_j : 1 \leq j < m \}.
$$
\end{definition}
\begin{remark}
It follows immediately from the definition that $|G_{f,\alpha,N}| = |G_{f,\alpha,0,N}|$.
Furthermore, for any $f,\alpha,\beta$, and $N$ we have $|G_{f,\alpha,\beta,N}| = |G_{c_1f+c_2,\alpha,\beta,N}|$ for constants $c_1 \neq 0$ and $c_2$. 
\end{remark}

To make precise the notion of adjacent points in the image of $f$, we introduce the following definition.

\begin{definition}
	Let $f$ be a function and let $x_1, x_2, \ldots, x_n$ be a sequence of points in its domain.
	If $f(x_i) \neq f(x_j)$ for $1 \leq i,j \leq n$, then $f(x_i)$ and $f(x_j)$ are called \textit{nearest neighbors} if the set
	$$
	\Big\{f(x_k) : \min\{f(x_i),f(x_j)\} \leq f(x_k) \leq \max\{f(x_i),f(x_j)\}, \,\ 1 \leq k \leq n\Big\}
	$$
	is equal to either $\{f(x_i),f(x_j)\}$ or $\{f(x_1), f(x_2), \ldots, f(x_n)\}$.
	In the second case, we call the pair $f(x_i)$ and $f(x_j)$ \textit{extremal nearest neighbors}.
\end{definition}

A natural generalization of the Three Gap Theorem is to affine transformations of the fractional part function $\frp(x)$.
Without loss of generality, one may assume that a function has period 1 by replacing a function $f(x)$ with period $P$ by $f(\frac{x}{P})$.
We may therefore restrict our attention to functions with period 1 as in the following proposition, which is a direct consequence of the Three Gap Theorem. 

\begin{proposition}
\label{affine}
Let $f$ be a function with period $1$ given by $f(x) = mx + c$ on its fundamental domain, where $m, c \in \mathbb{R}$.
Then for any $\alpha, \beta\in\mathbb{R}$ and $N \in \mathbb{N}$,
$$
|G_{f,\alpha, \beta, N}| \leq 3.
$$
\end{proposition}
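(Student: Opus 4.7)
The plan is to reduce this directly to the classical Three Gap Theorem. First, I would observe that since $f$ has period $1$ and is given by $f(x) = mx + c$ on $[0,1)$, for every $x \in \mathbb{R}$ we have $f(x) = m \cdot \frp(x) + c$. When $m = 0$ the function is constant and $G_{f,\alpha,\beta,N} = \{0\}$, which is trivial; so assume $m \neq 0$. Then by the remark following Definition \ref{gaps} we have $|G_{f,\alpha,\beta,N}| = |G_{\frp,\alpha,\beta,N}|$, so it suffices to bound $|G_{\frp,\alpha,\beta,N}|$.

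Next I would identify the gap length set with the arc lengths of a partition of the circle. The values $\frp(d\alpha + \beta)$ for $1 \le d \le N$ are precisely the representatives in $[0,1)$ of the points $d\alpha + \beta \bmod 1$ on $\mathbb{R}/\mathbb{Z}$. Writing $t_1 < t_2 < \cdots < t_m$ for the distinct values, the differences $t_{j+1}-t_j$ and the wrap-around $t_1 - 0 + 1 - t_m$ are exactly the arc lengths determined by these points on the directed circle, since $\inf \frp = 0$ and $\sup \frp = 1$.

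Now I would use translation invariance: the set $\{d\alpha + \beta \bmod 1 : 1 \le d \le N\}$ is the image of $\{d\alpha \bmod 1 : 1 \le d \le N\}$ under rotation by $\beta$ on the circle, and rotations preserve arc lengths. Hence the multiset of arc lengths for these $N$ points on the circle is independent of $\beta$, so $|G_{\frp,\alpha,\beta,N}| = |G_{\frp,\alpha,0,N}| = |G_{\frp,\alpha,N}|$. Applying the Three Gap Theorem as stated in the introduction to the set $\{d\alpha \bmod 1 : 1 \le d \le N\}$ gives at most three distinct arc lengths, so $|G_{f,\alpha,\beta,N}| \le 3$.

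There is really no significant obstacle here; the argument is a bookkeeping reduction. The only point requiring slight care is confirming that the definition of the gap length set in Definition \ref{gaps}, with its $s_1 - \inf f + \sup f - s_n$ wrap-around term, coincides with the circle arc-length partition when $f = \frp$, so that the classical Three Gap Theorem applies verbatim.
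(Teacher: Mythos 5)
Your proposal is correct and follows essentially the same route as the paper: both arguments reduce to the classical Three Gap Theorem by observing that the affine map $s \mapsto ms + c$ scales every gap (including the wrap-around term) by $|m|$, so the number of distinct gap lengths is unchanged. The only cosmetic difference is that you make the handling of the shift $\beta$ explicit via rotation invariance of arc lengths on $\mathbb{R}/\mathbb{Z}$, a point the paper's proof passes over silently when it applies the Three Gap Theorem to the points $\frp(d\alpha+\beta)$.
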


\begin{proof}
If $m=0$ the theorem is trivially true, and we may further assume $m > 0$ as the proof is similar for $m < 0$.
Let $s_1, s_2, \ldots, s_n$ denote the distinct values of 
$$
\frp(d\alpha + \beta),\quad 1 \leq d \leq N,
$$
indexed such that $s_1 < s_2 < \dots < s_n$.
By the Three Gap Theorem, there exist $v_1, v_2, v_3 \in \mathbb{R}$ (not necessarily distinct) such that for each $1\leq j<n$, we have $s_{j+1} - s_j = v_i$ for some $1\le i\le 3$, and $s_1 - \inf{\frp} + \sup{\frp} - s_n = v_k$ for some  $1\le k\le 3$.
	
The values of $f(d\alpha + \beta)$ for all $1 \leq d \leq N$ are of the form $ms_j + c$ for $1 \leq j \leq n$.
Since $m > 0$, the values $ms_j + c$  are distinct and may be relabeled as $t_j = ms_j + c$.
To determine the cardinality of $G_{f,\alpha,\beta,N}$, we compute $t_{j+1} - t_j$ for $1 \leq j < n$ and $t_1 - \inf{f} + \sup{f} - t_n$. Since $t_{j}$ and $t_{j+1}$ are nearest neighbors, $s_{j}$ and $s_{j+1}$ must be nearest neighbors.
Therefore, 
	\begin{align*}
	t_{j+1} - t_j &= ms_{j + 1} + c - ms_{j} - c 
	= m(s_{j + 1} - s_{j}) 
	=mv_i 
	\end{align*}
	for some $1 \leq i \leq 3$.
	Furthermore, $\sup{f} = m \sup{\frp} + c$ and $\inf{f} = m \inf{\frp} + c$, so
	\begin{align*}
	t_1 - \inf{f} + \sup{f} - t_n 
	=m(s_1 - \inf{\frp}  + \sup{\frp} - s_n)
	=mv_k 
	\end{align*}
	for some $1 \leq k \leq 3$.
	This demonstrates the elements of $G_{f,\alpha,\beta,N}$ are of the form $mv_i$ for $1 \leq i \leq 3$, from which it follows that $|G_{f,\alpha,\beta,N}| \leq 3$. 
\end{proof}

\begin{remark}
Intuitively, the finiteness of the gap length set associated to a function should be independent of affine transformations, but this does not follow from a trivial change of variables.
Indeed, if $f$ is a periodic function and $r$ is a real number, it is not necessarily true that $G_{f(x), \alpha, N}$ and $G_{f(x-r), \alpha, N}$ are equal, and in fact the gap length sets may be disjoint.
For example, if $\alpha = \frac{1}{4}$ and one evaluates $\cos(x)$ at $\alpha, 2\alpha, 3\alpha$, then the resulting gap lengths are approximately $0.0913$, $0.1459$, and $1.7628$.
If the shifted function $\cos(x - \frac{\pi}{2})$ is evaluated at $\alpha, 2\alpha, 3\alpha$ as well, the gap lengths are approximately $0.2022$, $0.2320$, and $1.5658$.
This example illustrates that the hypothesis $f''(0) \neq 0$ in Theorem \ref{sin} cannot be dropped without loss of generality by assuming the function $f$ may be horizontally translated.
\end{remark}

\section{Injective Piecewise-Linear Functions}
\label{PLPI}

We now turn our attention to piecewise-linear periodic functions which are injective on their fundamental domains.

\begin{definition}
Given $\alpha \in \mathbb{R}$, $N \in \mathbb{N}$, and a piecewise-linear periodic function $f$, let $s$ and $s'$ be elements of $\{f(d\alpha) : 1 \leq d \leq N \}$ which are nearest neighbors such that $s < s'$.
If $s=f_i(d\alpha)$ and $s'=f_i(d'\alpha)$ for some piece $f_i$ of $f$ and $1 \leq d, d' \leq N$ such that $\frp(d\alpha)$ and $\frp(d'\alpha)$ are elements of $I_i$, then we call $s' - s$
an \textit{interior gap length} of the piece $f_i$, or more generally, of $f$.
When this is not the case, we call $s' - s$ a \textit{non-interior gap length}.
If $s$ and $s'$ are extremal nearest neighbors, then we call
$$
s - \inf{f} + \sup{f} - s'
$$
the \textit{extremal gap length} of $f$.
\end{definition}
\noindent
By definition, there is a unique extremal gap length of $f$ once $\alpha$ and $N$ are fixed.
However, it is not necessarily distinct from all other gap lengths of $f$.
Note also that a non-interior gap length is not necessarily the extremal gap length.

\begin{lemma}
\label{Interior}
Let $f$ be a piecewise-linear periodic function with period 1 which is injective on its fundamental domain.
Suppose also that $f$ has distinct pieces $f_1$, $f_2$, \ldots, $f_k$ with slopes $|m_1| = |m_2| = \cdots = |m_k| = m$ defined on subintervals $I_1, I_2, \ldots, I_k$, respectively, as in the discussion at the beginning of the section.
Then for all $\alpha \in \mathbb{R}$ and $N \in \mathbb{N}$, the union of the sets of the interior gap lengths of $f_1$, $f_2$, \ldots, $f_k$ has at most three elements.
In particular, each piece of $f$ has at most three interior gap lengths.
\end{lemma}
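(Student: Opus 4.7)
The plan is to reduce the counting of interior gap lengths across all same-slope-magnitude pieces to a direct application of the Three Gap Theorem on the set $T = \{\operatorname{frac}(d\alpha) : 1 \le d \le N\}$.

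First, I would set up the geometry on each piece. Since $f$ is piecewise linear with $f_i$ affine on $I_i$ of slope $m_i$ with $|m_i|=m$, the restriction $f_i\colon I_i \to J_i:=f_i(I_i)$ is a monotonic affine bijection onto an interval $J_i\subset\mathbb{R}$. Injectivity of $f$ on $[0,1)$ forces the intervals $J_1,\dots,J_k$ to be pairwise disjoint. This disjointness is what lets one translate nearest-neighbor statements for the full set $\{f(d\alpha):1\le d\le N\}$ into nearest-neighbor statements on a single piece.

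Next, I would analyze a single interior gap length. Suppose $s<s'$ are nearest neighbors with $s=f_i(d\alpha)$, $s'=f_i(d'\alpha)$ and $\operatorname{frac}(d\alpha),\operatorname{frac}(d'\alpha)\in I_i$. Because $J_i$ is an interval disjoint from the other $J_j$, every element of $\{f(d\alpha)\}$ lying strictly between $s$ and $s'$ must itself lie in $J_i$ and hence have its preimage in $I_i$. Thus $s,s'$ are nearest neighbors within $\{f_i(t):t\in T\cap I_i\}$. Monotonicity of $f_i$ now transports this to a nearest-neighbor statement in $T\cap I_i\subset[0,1)$: if $t_1,t_2$ are the corresponding preimages, then $t_1,t_2$ are consecutive in $T\cap I_i$, and
\[
s' - s \;=\; m\,|t_2 - t_1|.
\]
Moreover, because $I_i$ is an interval, consecutive points of $T\cap I_i$ are automatically consecutive in $T$ itself (any point of $T$ strictly between $t_1$ and $t_2$ would lie in $I_i$). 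So each interior gap length of $f_i$ equals $m$ times the distance between two consecutive points of $T$ in $[0,1)$ (not crossing the wrap-around).

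Finally I would invoke the Three Gap Theorem: the set $T$ partitions $\mathbb{R}/\mathbb{Z}$ into arcs of at most three distinct lengths, so the non-wrap-around consecutive differences take at most three distinct values $g_1,g_2,g_3$. Consequently every interior gap length of every $f_i$ lies in the set $\{m g_1, m g_2, m g_3\}$, proving the union has at most three elements and that each individual piece has at most three interior gap lengths.

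The only delicate point — and the place I would be most careful — is step two: verifying that disjointness of the $J_i$ together with injectivity of $f$ really does guarantee that a nearest-neighbor pair from $J_i$ in the global image is still a nearest-neighbor pair when restricted to $J_i$, so that no cross-piece interference occurs. Once that is in hand, the Three Gap Theorem does the rest essentially for free.
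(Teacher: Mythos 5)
Your proposal is correct and follows essentially the same route as the paper: reduce each interior gap to the distance between consecutive points of $\{\frp(d\alpha)\}$ via the monotone affine piece, then apply the Three Gap Theorem and multiply by the common slope magnitude $m$. You are in fact more explicit than the paper about the key transport step (disjointness of the images $f_i(I_i)$ forcing nearest neighbors in the image to come from consecutive points of $T$), which the paper only gestures at by citing the reasoning of Proposition \ref{affine}.
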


\begin{proof}
Suppose that a piece of $f$ is given by  $f_j(x) = m_jx+ c_j$.
Let $d$ and $d'$ be distinct integers with $1\le d,d' \le N$ such that $\frp(d\alpha), \frp(d'\alpha) \in I_j$, and $f(d\alpha) = f_j(d\alpha)$ and $f(d'\alpha) = f_j(d'\alpha)$ are nearest neighbors.
(Note that if no such $d$ and $d'$ exist, then the lemma is vacuously true.)
Then by the same reasoning as in the proof of Proposition \ref{affine}, $\frp(d\alpha)$ and $\frp(d'\alpha)$ are nearest neighbors.
By the Three Gap Theorem there exist $v_1, v_2, v_3 \in \mathbb{R}$ (not necessarily distinct) which are independent of $d$, $d'$ such that 
$$|\frp(d'\alpha) - \frp(d\alpha)| = v_i$$
for some $1\le i \le 3$.
Thus, we may explicitly compute the interior gap length between $f(d\alpha)$ and $f(d'\alpha)$ as  
	\begin{align*}
	|f_j(d'\alpha) - f_j(d\alpha)| &= |m_j \frp(d'\alpha) + c_j - m_j \frp(d\alpha) - c_j|\\
	&=|m_j(\frp(d'\alpha) - \frp(d\alpha))| \\
	&=|m_j||\frp(d'\alpha) - \frp(d\alpha)| \\
	&=|m_j| v_i \\
	&= m v_i.
	\end{align*}
This gap length is independent of the choice of $j$, which concludes the proof.
\end{proof}

We may now prove the general statement for piecewise-linear periodic functions that are injective on their fundamental domains.

\begin{proof}[Proof of Theorem \ref{general}]
	The maximal number of gap lengths of $f$ is achieved when the maximal number of both interior gap lengths and non-interior gap lengths is achieved, so we may simply count the possibilities to determine an upper bound for the cardinality of the gap length set.
	By Lemma \ref{Interior}, the interior gap lengths of pieces whose slopes have a common magnitude are all members of the same set which has at most three elements.
	Therefore, the maximal number of distinct interior gap lengths from all pieces of $f$ is $3\mu$.
	There are at most $\ell$ gap lengths between nearest neighbors of the form $f(d\alpha) = f_i(d\alpha)$ and $f(d'\alpha) = f_j(d'\alpha)$ for some $1 \leq d, d' \leq N$ such that $\frp(d\alpha) \in I_i$ and $\frp(d'\alpha) \in I_j$ for $i \neq j$.
	Adding the number of possibilities yields an upper bound of $3\mu + \ell$.
\end{proof}

	\label{twopiece}
	The above bound is sharp, but in special cases may be tightened.
	For example, if $f$ satisfies the hypotheses of Theorem \ref{general}, is monotonic on its fundamental domain, and the slopes of its pieces $f_1$ and $f_\ell$ are equal, then $|G_{f, \alpha, N}| \leq 3\mu + \ell - 1$.
	To see this, observe that the maximal number of gap lengths of $f$ is achieved when the maximal number of both interior gap lengths and non-interior gap lengths occur.
	We shall demonstrate that in this case, the extremal gap length is equal to an interior gap length, thus tightening the bound by 1.
	
	Let the first and last pieces of $f$ be given by $f_1(x) = mx + c_1$ and $f_\ell(x) = mx + c_\ell$, and let $\alpha$ and $N$ be numbers such that the maximal number of distinct gap lengths of $f$ is realized.
	Let $g$ be the periodic function with the same fundamental domain as $f$, defined by a single piece $g(x) = mx + c_1$.
	Observe that the interior gap lengths of $f_1$, $f_\ell$, and $g$ are equal.
	If $f_1$ has three distinct interior gap lengths, then so must $g$, in which case the extremal gap length of $g$ is equal to one of its interior gap lengths by Proposition \ref{affine}.
	Thus, it suffices to show that the extremal gap lengths of $f$ and $g$ are equal, which follows from the fact that vertical translation preserves distances.
	More explicitly, let $s_1 < \cdots < s_n$ and $t_1 < \cdots <t_n$ denote the distinct values of $f(k\alpha)$ and $g(k\alpha)$, respectively, for $1 \leq k \leq N$.
	Observe that if $f$ is increasing, then $s_1 = t_1$, $\inf f = \inf g$, $s_n = t_n - c_1 + c_\ell$, and $\sup f = \sup g - c_1 + c_\ell$.
	Similarly, if $f$ is decreasing, then $s_n = t_n$, $\sup f = \sup g$, $s_1 = t_1 + c_1 - c_\ell$, and $\inf f = \inf g + c_1 - c_\ell$.
	In either case, $\sup f - s_n = \sup g - t_n$ and $s_1 - \inf f = t_1 - \inf g$, so $\sup f - s_n + s_1 - \inf f= \sup g - t_n + t_1 - \inf g$, and thus the extremal gap lengths of $f$ and $g$ are equal.
	
	The simplest illustration of this tightened bound is the Three Gap Theorem itself.
	Note that a tighter bound does not necessarily hold for functions which are not monotonic on their fundamental domains.
	Consider, for example, the periodic function $f$ whose definition on its fundamental domain $[0,1)$ is
	$$
	f(x) = 
	\begin{cases}
	x + 1 & 0 \leq x < \frac{3}{4} \\
	x - \frac{1}{2} & \frac{3}{4} \leq x < 1
	\end{cases}
	$$
	and let $\alpha = \frac{\pi}{16}$ and $N = 7$.
	Then one may compute $|G_{f, \alpha, N}| = 5$.
	
\section{Arbitrary Piecewise-Linear Functions}
\label{PLNPI}

In this section we turn our attention to piecewise-linear functions which are not injective on their fundamental domains.
A simple example is the triangle wave, given by the function $f(x) = [x]$, which returns the distance from $x$ to the nearest integer.
The analogue of the Three Gap Theorem for the triangle wave, due to H. Don \cite[Theorem 2]{don} states that $2 \le |G_{f,\alpha,N}| \le 4$.
For functions with pieces whose slopes do not have the same magnitude, one may consider the gap lengths formed over intervals on which a function is not injective by recalling the following result due to P. Alessandri and V. Berth\'e \cite[Theorem 18]{berthe}.

\begin{lemma}
	\label{AB}
	Let $\alpha \in (0,1)$ be irrational, let $\beta \in \mathbb{R}$ be non-zero, and let $N \in \mathbb{Z}$ be non-zero.
	Then the elements of $$\{\frp(d\alpha)\}\cup \{\frp(d\alpha+\beta)\}$$ for $d = 0, \ldots, N$ partition the circle into a finite number of intervals whose lengths take at most five distinct values.	
\end{lemma}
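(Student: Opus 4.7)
The plan is to deduce this two-orbit version from the classical Three Gap Theorem via a merging argument. Since $\beta$ is fixed and $\mathcal{O}_\beta := \{\frp(d\alpha + \beta) : 0 \le d \le N\}$ is the rotation of $\mathcal{O}_0 := \{\frp(d\alpha) : 0 \le d \le N\}$ by $\beta$ on $\mathbb{R}/\mathbb{Z}$, the Three Gap Theorem gives both partitions the same multiset of arc lengths $\{L_1, L_2, L_3\}$ (with $L_3 = L_1 + L_2$ when all three occur), and this set depends only on $\alpha$ and $N$, not on $\beta$.

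I would then fix the partition of $\mathbb{R}/\mathbb{Z}$ induced by $\mathcal{O}_0$ and insert the points of $\mathcal{O}_\beta$ into it. The key structural input is that the rotation $x \mapsto x + \alpha$ acts, up to boundary exceptions near the ends of the orbit, as a cyclic permutation on the gaps of each of the three length classes; the three-distance structure precisely identifies these orbits of gaps. Consequently, the intersection pattern of $\mathcal{O}_\beta$ with the gaps of $\mathcal{O}_0$ is highly constrained. Concretely, I would aim to prove that all gaps of a given length $L_i$ that meet $\mathcal{O}_\beta$ are cut at a common relative position $\delta_i \in (0, L_i)$ determined by $\beta$, so that each length class contributes only the two new sublengths $\delta_i$ and $L_i - \delta_i$ to the combined partition.

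A naive count then yields at most six lengths, but the relation $L_3 = L_1 + L_2$ together with the compatible way the displacements $\delta_1, \delta_2, \delta_3$ are induced by the same shift $\beta$ forces a coincidence (for example, $\delta_3 \in \{\delta_1, \delta_2, L_1 + \delta_2, \ldots\}$ depending on the position of $\beta$), reducing the total to five. The main obstacle, and what I expect to be the most delicate part, is establishing the common-displacement claim rigorously and controlling the boundary effects coming from the endpoint gaps adjacent to $N\alpha$ and $N\alpha + \beta$, which must be shown not to introduce any further distinct gap lengths. A possibly cleaner alternative route is to view the merged partition as a finite segment of the orbit of a three-interval exchange transformation on $\mathbb{R}/\mathbb{Z}$ and identify the five gap lengths with the lengths of the intervals of continuity of its induced first-return maps, in the spirit of Rauzy--Veech renormalization; this would replace the boundary bookkeeping by a cleaner symbolic-dynamical statement.
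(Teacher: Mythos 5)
The paper does not prove this lemma at all: it is imported verbatim from Alessandri--Berth\'e \cite[Theorem 18]{berthe}, so you are attempting to reprove a nontrivial known theorem rather than reproduce an argument from the text. Your attempt has a genuine gap: the central structural claim --- that all gaps of $\mathcal{O}_0$ of a given length $L_i$ which meet $\mathcal{O}_\beta$ are cut at a single common relative position $\delta_i$, so that each length class contributes only the two sublengths $\delta_i$ and $L_i-\delta_i$ --- is false. Take $\alpha=\sqrt{2}-1$, $N=2$, $\beta=0.3$. Then $\mathcal{O}_0=\{0,\alpha,2\alpha\}\approx\{0,0.4142,0.8284\}$ has two gaps of length $\alpha$ and one of length $1-2\alpha\approx 0.1716$, while $\mathcal{O}_\beta\approx\{0.3,0.7142,0.1284\}$. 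The gap $(0,\alpha)$ receives \emph{two} points of $\mathcal{O}_\beta$ (at relative positions $0.1284$ and $0.3$), the gap $(\alpha,2\alpha)$ of the \emph{same} length receives one point at relative position $0.3$, and the gap $(2\alpha,1)$ receives none. The single length class $L=\alpha$ thus already contributes four distinct sublengths ($\approx 0.1284,\,0.1716,\,0.1142,\,0.3$), not two, and your ``at most six, reduced to five'' count collapses. (The lemma itself survives here --- the union has four gap lengths --- but not via your mechanism.)

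The underlying reason is that a point $\frp(d'\alpha+\beta)$ lies in the gap to the right of $\frp(d\alpha)$ exactly when $\frp((d'-d)\alpha+\beta)\in(0,L)$ \emph{and} $0\le d'\le N$; the first condition is independent of $d$ but may hold for several values of $k=d'-d$, and the second condition prunes different subsets of those $k$ for different $d$. So even away from the endpoints the cut pattern need not be a single point, and near the endpoints (which, as the example shows, can be the whole picture for small $N$) the pruning is precisely what generates the extra lengths; bounding their number is the entire content of the theorem, which your sketch defers rather than resolves. The relation $L_3=L_1+L_2$ is likewise invoked without a mechanism for the claimed coincidence. A workable elementary route is the standard ``gap dynamics'' argument: a gap $(x,y)$ of the union maps to the gap $(x+\alpha,y+\alpha)$ of equal length unless $x$ or $y$ lies in $\{\frp(N\alpha),\frp(N\alpha+\beta)\}$ or one of $0,\beta$ falls strictly between $x+\alpha$ and $y+\alpha$; counting the resulting chains of gaps bounds the number of distinct lengths, and sharpening that count to five is where the real work lies. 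Your alternative suggestion via induced maps of a three-interval exchange is plausible in spirit but is stated at a level of generality that does not yet constitute a proof.
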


\noindent The result immediately extends to all real $\alpha$, which we use in the proof of the following proposition.

\begin{proposition}
	Let $f$ be a periodic  function defined on its fundamental domain by 
	$$f(x) =  \begin{cases} 
	x & 0\leq x < \kappa \\
	x - \beta & \kappa \leq x < 1 
	\end{cases} $$
	where $0<\beta \leq \kappa < 1$. Then for any $\alpha \in \mathbb{R}$ and $N \in \mathbb{N}$, we have $|G_{f,\alpha,N}| \leq 10.$
\end{proposition}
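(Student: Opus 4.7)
The plan is to analyze the gap lengths of $V = \{f(d\alpha) : 1 \le d \le N\}$ on the image interval $[0, L)$, where $L = \max(\kappa, 1-\beta)$, by decomposing $[0, L)$ according to the preimage structure of $f$ and invoking both the Three Gap Theorem and Lemma~\ref{AB}.

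Setting $A = \{\frp(d\alpha) : 1 \le d \le N\}$ and $B = A \cup (A - \beta \bmod 1)$ as subsets of $S^1 = \mathbb{R}/\mathbb{Z}$, a direct computation gives
\[
V = \big(A \cap [0, \kappa)\big) \cup \big((A \cap [\kappa, 1)) - \beta\big) \subseteq [0, L).
\]
I would partition $[0, L)$ into three subintervals $I_1, I_2, I_3$, where $I_2$ is the ``overlap'' region on which a point of $V$ can admit preimages from both pieces of $f$. On $I_2$ one has $V \cap I_2 = B \cap I_2$, whereas on $I_1$ and $I_3$ only one piece of $f$ contributes.

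Consecutive elements of $V$ lying entirely within $I_1$ (or $I_3$) equal, or are shifts by $-\beta$ of, consecutive elements of $A$ on the real line, so their gap lengths are cyclic gap lengths of $A$ on $S^1$; by the Three Gap Theorem, at most $3$ distinct values appear in total. Consecutive elements of $V$ within $I_2$ are consecutive elements of $B$ on the line, so by Lemma~\ref{AB} they contribute at most $5$ distinct cyclic gap lengths of $B$. Hence within-region gaps contribute at most $8$ distinct lengths, while at most two boundary gaps plus the extremal gap of $V$ on the circle of circumference $L$ contribute three more, for a naive total of $11$.

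The main obstacle is to trim one count to reach $10$. In the generic case where $V \cap I_j$ is non-empty for each $j = 1, 2, 3$, I would argue that the extremal gap of $V$ on $[0, L)$ coincides with the cyclic extremal gap of $A$ on $S^1$ and is therefore already counted among the Three Gap lengths. In the regime $\kappa \le 1-\beta$, for instance, one has $\min V = \min A$ (attained in $I_1$) and $\max V = \max A - \beta$ (attained in $I_3$), so that the reduction of the circumference from $1$ down to $L = 1-\beta$ exactly cancels the $-\beta$ shift and the extremal gap $L + \min V - \max V$ simplifies to the cyclic extremal $1 + \min A - \max A$ of $A$. A parallel argument handles the regime $\kappa > 1-\beta$. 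In the degenerate cases where some $V \cap I_j$ is empty, the within-region count drops sufficiently that direct counting yields the bound of $10$ without needing the coincidence argument; the case analysis across both regimes and all degenerate configurations is where the main work lies.
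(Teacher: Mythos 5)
Your proposal follows essentially the same route as the paper: decompose the image into the overlap region (where both pieces contribute) and the two single-piece regions, apply Lemma~\ref{AB} to get at most $5$ gap lengths in the overlap, the Three Gap Theorem to get at most $3$ shared among the single-piece regions, allow $2$ boundary gaps, and absorb the extremal gap into an already-counted length to reach $10$. Your explicit computation showing $\min V = \min A$ and $\max V = \max A - \beta$ (so that the extremal gap of $V$ equals the cyclic extremal gap of $A$) is a slightly more concrete version of the paper's appeal to its vertical-translation argument, but the structure and the key lemmas are identical.
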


\begin{proof}
Partition $[0,1)$ into $I_1 = [0,\kappa)$ and $I_2 = [\kappa,1)$, which correspond to the pieces $f_1(x) = x$ and $f_2(x) = x - \beta$, respectively. Then the image of $f$ may be expressed as the disjoint union
$$
U_1\sqcup U_2\sqcup U_{12} \coloneqq \big(f(I_1) \setminus f(I_2) \big) \sqcup \big( f(I_2) \setminus f(I_1) \big) \sqcup \big( f(I_1) \cap f(I_2) \big).
$$
The maximal number of gap lengths of $f$ is achieved when in each of the intervals in the above partition of $f([0,1))$, the maximal number of gap lengths occur.
In this case, there are at most two distinct gap lengths between nearest neighbors $s$ and $s'$ where $s \in U_{12}$ and $s' \in U_i$ for $i=1,2$.
Thus, the proof reduces to a matter of counting the number of possible gap lengths between nearest neighbors $s$ and $s'$ in the cases where (i) $s, s' \in U_{12}$ and (ii) $s, s' \in U_i$ or $s \in U_1$ and $s' \in U_2$.

In the case where $s, s' \in U_{12}$, then $s$ and $s'$ are of the form of the points in Theorem \ref{AB}, so the interval between them may have one of at most five lengths.
(Note that by assuming the maximal number of gap lengths have occurred, $s$ and $s'$ are necessarily non-extremal nearest neighbors.)
Similarly, the distance between nearest neighbors $s, s' \in U_i$ is an interior gap length of $f_i$, so it follows from Lemma \ref{Interior} that the interval between them may have one of at most three lengths.

It remains only to consider nearest neighbors $s$ and $s'$ where $s \in U_1$ and $s' \in U_2$.
If the maximal number of gap lengths occur over each interval in the given partition of $f([0,1))$, then if there exists a gap length between nearest neighbors $s \in U_1$ and $s' \in U_2$, it is necessarily the extremal gap length of $f$.
By a similar argument as given after the proof of Theorem \ref{general}, the extremal gap length of $f$ is a possible interior gap length.
Adding the possibilities yields a total of at most ten distinct gap lengths. 	
\end{proof}

Though the above results give examples of functions with gap length sets with bounded cardinalities, there does not exist an analogue of Theorem \ref{general} for arbitrary piecewise-linear periodic functions, even for those with only two pieces.
For any $n \in \mathbb{N}$, a piecewise-linear periodic function $f$ with two pieces may be constructed such that for judiciously chosen $\alpha \in \mathbb{R}$ and $N \in \mathbb{N}$, the cardinality of $G_{f, \alpha, N}$ exceeds $n$.
We illustrate this in the following proof. 

\begin{proof}[Proof of Theorem \ref{main}]
	Let $n \in \mathbb{N}$ and let $N$ be an even natural number greater than 4 such that $n < \frac{N}{2} - 1$. Let $0 <\varepsilon < \frac{2}{N-4}$ and let $f$ be the periodic function defined by
	$$
	f(x) =
	\begin{cases}
	x & 0 \leq x \leq \frac{1}{2} \\
	(1 + \varepsilon)x - \frac{1 + \varepsilon}{2} & \frac{1}{2} < x < 1
	\end{cases}
	$$
	on its fundamental domain.
	Then for $\alpha = \frac{1}{N}$, the first $\tfrac{N}{2}$ terms and the last term of the sequence $\alpha, 2\alpha, \ldots, N\alpha$ are evaluated by the piece $f_1(x) = x$ and the remaining terms are evaluated by
	$f_2(x) = (1 + \varepsilon)x - \frac{1 + \varepsilon}{2}
	$.
	For all integers $a$ satisfying 
	$2 \leq a \leq \tfrac{N}{2} - 1$,
	$$
	\frac{a - 1}{N} + \frac{(a - 1)\varepsilon}{N} < \frac{a}{N}. 
	$$
	This inequality shows that the sequence of images $f(\alpha), f(2\alpha), \ldots, f(N\alpha)$ is rearranged in increasing order as
	$$
	0, \,\frac{1}{N}, \, \frac{1}{N} + \frac{\varepsilon}{N}, \, \frac{2}{N}, \, \frac{2}{N} + \frac{2\varepsilon}{N}, \, \ldots,  \frac{\tfrac{N}{2}-1}{N}, \, \frac{\tfrac{N}{2} - 1}{N} + \frac{(\tfrac{N}{2} - 1)\varepsilon}{N}, \, \frac{\tfrac{N}{2}}{N}.
	$$
	A subset of the gap lengths may be calculated by taking the differences of consecutive terms in the sequence which are of the form $\frac{k}{N}, \, \frac{k}{N} + \frac{k\varepsilon}{N}$ for $1 \leq k \leq \frac{N}{2} - 1$.
	This yields $\frac{N}{2}-1$ distinct gap lengths:
	$$\frac{\varepsilon}{N}, \frac{2\varepsilon}{N}, \ldots, \frac{(\tfrac{N}{2} - 1)\varepsilon}{N}.$$ Since $\frac{N}{2} - 1 > n$, we conclude that $|G_{f,\alpha,N}| > n$.
\end{proof}

\section{Non-Linear Functions}
\label{NL}

Using a method similar to that in the proof of Theorem \ref{main}, one can show that a bound on the cardinality of the gap length set does not exist for certain periodic functions, for example, $\sin(x)$, even if such a function is restricted to a domain on which it is injective.
We prove this for a class of functions satisfying mild hypotheses. 

\begin{proof}[Proof of Theorem \ref{sin}]
	Let $f$ be a $C^2$ function with period $P$ such that $f''(0) \neq 0$, and let 
	\[
	\mathcal{I} = \inf \{P \geq x \geq 0 : f ''(x) = 0 \}.
	\]
	By continuity of $f''$, the preimage of $\{0\}$ under $f''$ is a closed set, so 
	\[
	(f'')^{-1}(\{0\}) \cap [0,P] = \{P \geq x \geq 0 : f ''(x) = 0 \}
	\]
	is compact and thus contains its infimum.
	It follows from $f''(0) \neq 0$ that $\mathcal{I} \neq 0$.
	Since $f''$ is continuous, $f''(x) \neq 0$ for $x \in [0,\mathcal{I}]$ implies $f''$ is either positive or negative on $[0,\mathcal{I}]$, so $f'$ is strictly monotone and thus injective when restricted to $[0,\mathcal{I}]$.
	Now let 
	\[
	\mathcal{I}' = \inf \{ \mathcal{I} \geq x \geq 0 : f'(x) = 0\}.
	\]
	By the argument above, $\mathcal{I}'$ is contained in the compact set $(f')^{-1}(\{0\}) \cap [0, \mathcal{I}]$.
	Since $f'$ is injective and continuous on $[0, \mathcal{I}]$, if $\mathcal{I}' = 0$ then $f'$ is either positive or negative on $[0, \mathcal{I}]$, and if $\mathcal{I}' \neq 0$, then $f'$ is either positive or negative on $[0, \mathcal{I}']$.
	This implies $f$ is strictly monotone and thus injective on $[0,\mathcal{I}]$ or $[0, \mathcal{I}']$, respectively.
	
	Given any $n \in \mathbb{N}$, let $\alpha = \frac{\mathcal{I}}{n + 1}$ if $\mathcal{I}' = 0$ or $\alpha = \frac{\mathcal{I}'}{n + 1}$ if $\mathcal{I}' \neq 0$.
	Since the restriction of $f$ to $[0, \mathcal{I}]$ or $[0, \mathcal{I}']$, respectively, is injective, the points 
	\[
	f(\alpha), f(2\alpha), \ldots, f((n+1)\alpha)
	\]
	are distinct and in either ascending or descending order, which implies 
	\[
	\{|f((k+1)\alpha) - f(k\alpha)| : 1 \leq k \leq n\} \subset G_{f, \alpha, n+1}.
	\]
	Now, the gap lengths in this subset are values of the function $|f((x+1)\alpha) - f(x\alpha)|$.
	The derivative 
	\[
	\alpha f'((x + 1)\alpha) - \alpha f'(x\alpha)
	\]
	 of $f((x+1)\alpha) - f(x\alpha)$ is non-zero for $x \in [0,n]$ since $f'$ is injective on its restricted domain, so by continuity, $\alpha f'((x + 1)\alpha) - \alpha f'(x\alpha)$ is either positive or negative on $[0,n]$.
	This implies $f((x+1)\alpha) - f(x\alpha)$ on $[0,n]$ is strictly monotone and either positive or negative since $f$ is strictly monotone on $[0,\mathcal{I}]$.
	Thus, $|f((x+1)\alpha) - f(x\alpha)|$ is injective on $[0,n]$.
	In particular, the values of 
	\[
	|f((k+1)\alpha) - f(k\alpha)|,\qquad 1 \leq k \leq n,
	\]
	are distinct and therefore $|G_{f, \alpha, n+1}| \geq n$.
\end{proof}

\subsection*{Acknowledgments} The authors acknowledge the support of the Center for Women in Mathematics, Smith College. The authors thank Sarah Brauner for feedback on the manuscript, Andrew O'Desky for crucial suggestions, and the referee at \textit{Integers} for comments improving the main results and exposition of the paper.
The third named author also thanks Manish Mishra for suggesting this problem. 
\bibliography{3GT}

\begin{thebibliography}{\'{S}59}

\bibitem[AB98]{berthe}
Pascal Alessandri and Val\'{e}rie Berth\'{e}.
\newblock Three distance theorems and combinatorics on words.
\newblock {\em Enseign. Math. (2)}, 44(1-2):103--132, 1998.

\bibitem[BGS17]{BGS}
Antal Balog, Andrew Granville, and Jozsef Solymosi.
\newblock Gaps between fractional parts, and additive combinatorics.
\newblock {\em Q. J. Math.}, 68(1):1--11, 2017.

\bibitem[BK18]{BK}
Val\'{e}rie Berth\'{e} and Dong~Han Kim.
\newblock Some constructions for the higher-dimensional three-distance theorem.
\newblock {\em Acta Arith.}, 184(4):385--411, 2018.

\bibitem[BS08]{BS}
Ian Biringer and Benjamin Schmidt.
\newblock The three gap theorem and riemannian geometry.
\newblock {\em Geometriae Dedicata}, 136(1):175--190, 2008.

\bibitem[CG76]{CG}
F.~R.~K. Chung and R.~L. Graham.
\newblock On the set of distances determined by the union of arithmetic
  progressions.
\newblock {\em Ars Combin.}, 1(1):57--76, 1976.

\bibitem[Che14]{chevallier}
Nicolas Chevallier.
\newblock Stepped hyperplane and extension of the three distance theorem.
\newblock In {\em Ergodic theory and dynamical systems}, De Gruyter Proc.
  Math., pages 81--92. De Gruyter, Berlin, 2014.

\bibitem[DH21]{DasHaynes}
Akshat Das and Alan Haynes.
\newblock A three gap theorem for the adeles.
\newblock {\em arXiv preprint arXiv:2107.05147}, 2021.

\bibitem[Don09]{don}
Henk Don.
\newblock On the distribution of the distances of multiples of an irrational
  number to the nearest integer.
\newblock {\em Acta Arith.}, 139(3):253--264, 2009.

\bibitem[Hal65]{halton}
John~H. Halton.
\newblock The distribution of the sequence {$\{n\xi \}\,(n=0,\,1,\,2,\,\cdots
  )$}.
\newblock {\em Proc. Cambridge Philos. Soc.}, 61:665--670, 1965.

\bibitem[HM20]{gen0}
Alan Haynes and Jens Marklof.
\newblock Higher dimensional {S}teinhaus and {S}later problems via homogeneous
  dynamics.
\newblock {\em Ann. Sci. \'{E}c. Norm. Sup\'{e}r. (4)}, 53(2):537--557, 2020.

\bibitem[Lia79]{liang}
Frank~M. Liang.
\newblock A short proof of the {$3d$} distance theorem.
\newblock {\em Discrete Math.}, 28(3):325--326, 1979.

\bibitem[MP20]{manish}
Manish Mishra and Amy~Binny Philip.
\newblock A generalization of the 3d distance theorem.
\newblock {\em Arch. Math. (Basel)}, 115(2):169--173, 2020.

\bibitem[\'{S}59]{swier}
S.~\'{S}wierczkowski.
\newblock On successive settings of an arc on the circumference of a circle.
\newblock {\em Fund. Math.}, 46:187--189, 1959.

\bibitem[Sla67]{slater}
Noel~B. Slater.
\newblock Gaps and steps for the sequence {$n\theta\ {\rm mod}\ 1$}.
\newblock {\em Proc. Cambridge Philos. Soc.}, 63:1115--1123, 1967.

\bibitem[S{\'o}s58]{sos}
Vera~T S{\'o}s.
\newblock On the distribution mod 1 of the sequence n$\alpha$.
\newblock {\em Ann. Univ. Sci. Budapest, E{\"o}tv{\"o}s Sect. Math},
  1:127--134, 1958.

\bibitem[vR88]{ravenstein}
Tony van Ravenstein.
\newblock The three gap theorem ({S}teinhaus conjecture).
\newblock {\em J. Austral. Math. Soc. Ser. A}, 45(3):360--370, 1988.

\end{thebibliography}
\bibliographystyle{alpha}
\end{document}